\theoremstyle{definition}
\theoremstyle{definition}
\theoremstyle{definition}
\theoremstyle{definition}\usepackage{amsmath}
\theoremstyle{plain}
\newtheorem{thm}{Theorem}
\theoremstyle{plain}
\newtheorem{prop}{Proposition}
\theoremstyle{plain}
\theoremstyle{plain}
\newtheorem{lem}{Lemma}
\newtheorem{coro}{Corollary}
\newcommand{\Gal}{\operatorname{Gal}}
\newcommand{\Q}{\mathbb{Q}}
\newcommand{\p}{\mathcal{P}}
\newcommand{\q}{\mathfrak{q}}
\newcommand{\Of}{\mathcal{O}_F}
\newcommand{\epsi}{\varepsilon}
\newcommand{\Nm}{\operatorname{N}}
\newcommand{\pp}{\mathfrak{p}}
\newcommand{\Ol}{\mathcal{O}_L}
\begin{document}

\title[On locally GCD Equivalent fields]{On locally GCD equivalent number fields}

\author[F.~Battistoni]{Francesco Battistoni}
\address{Dipartimento di Matematica\\
         Università di Milano\\
         via Saldini 50\\
         20133 Milano\\
         Italy}
         \email{francesco.battistoni@unimi.it}

\keywords{Equivalence of number fields, density of primes.}

\subjclass[2010]{Primary: 11R16, 11R44, 11R45}

\begin{abstract}
    Local GCD Equivalence is a relation between extensions of number fields which is weaker than the classical arithmetic equivalence. It was originally studied by Lochter with Weak Kronecker Equivalence.\\
    Among the many results he got, Lochter discovered that number fields extensions of degree $\leq 5$ which are locally GCD equivalent are in fact isomorphic. This fact can be restated saying that number fields extensions of low degree are uniquely characterized by the splitting behaviour of a restricted set of primes: in particular, also extensions of degree 3 and 5 are uniquely determined by their inert primes, just like the quadratic fields.\\
    The goal of this note is to present this rigidity result with a different proof, which insists especially on the densities of sets of prime ideals and their use in the classification of number fields up to isomorphism. Alongside Chebotarev's Theorem, no harder tools than basic Group and Galois Theory are required. 
\end{abstract}

\maketitle
%\begin{comment}
%\renewcommand{\thefootnote}{}
%\footnote{2010 \emph{Mathematics Subject Classification}: Primary 11R16, 11R44, 11R45}
%\footnote{\emph{Key words and phrases}: Equivalence of number fields, density of primes.}
%\renewcommand{\thefootnote}{\arabic{footnote}}
%\setcounter{footnote}{0}

\section{Introduction}
Given a number field $F$ and its ring of integers $\mathcal{O}_F$, it is a main topic in Algebraic Number Theory to study the factorization and the splitting type of a rational prime number $p$ in $\mathcal{O}_F$. Some classical questions dealing with this problem are the following: are number fields uniquely determined by the splitting types of rational primes? Is it true that if two number fields share a common set of primes with given splitting type, then the fields are isomorphic?\\
There are plenty of results concerning these questions, especially in the setting of arithmetic equivalence. Two number fields extensions $K/F$ and $L/F$ are said to be \textbf{arithmetically equivalent over $F$} if for almost every prime ideal $\mathfrak{p}\subset\Of$ the splitting types are the same.\\
The following facts, proved by Perlis \cite{perlis} and independent of the base number field $F$, give a strong characterization of arithmetically equivalent extensions:
\begin{itemize}
    \item If two extensions $K/F$ and $L/F$ of degree $\leq 6$ are arithmetically equivalent, then the two fields are $F$-isomorphic.
    \item There exist arithmetically equivalent extensions $K/F$ and $L/F$ of degree 7 which are not $F$- isomorphic.
    \item If $K/F$ and $L/F$ are arithmetically equivalent extensions and one of them is Galois, then they are $F$-isomorphic.
\end{itemize}
This note focuses on a relation which is weaker than arithmetic equivalence: two number fields $K$ and $L$ are \textbf{locally GCD equivalent over a number field $F$} if for every prime $\mathfrak{p}\subset\Of$ which is unramified in both $K$ and $L$ holds
\begin{equation}
    \gcd (f_{1,K}(\mathfrak{p}),\ldots,f_{t,K}(\mathfrak{p})) = \gcd(f_{1,L}(\mathfrak{p}),\ldots,f_{t',L}(\mathfrak{p}))
\end{equation}
where $f_K(\pp):=(f_{1,K}(\mathfrak{p}),\ldots,f_{t,K}(\mathfrak{p})) $ and $f_L(\pp):=(f_{1,L}(\mathfrak{p}),\ldots,f_{t',L}(\mathfrak{p}))$ are the splitting types of $\pp$ in the two fields.
\\
This relation, which is weaker than arithmetic equivalence, forces the number fields involved to have some constraints on their splitting types, and one naturally asks whether the occurring of this equivalence implies the isomorphism or not.
\\
%The name \textit{Local GCD Equivalence} was used by Linowitz, McReynolds and Miller  \cite{linowitzMcReynoldsMiller}, but the relation itself was already known in a different form, denoted \textit{Weak Kronecker Equivalence}; this relation, which is much weaker than arithmetical equivalence, was first studied by Lochter in \cite{lochterWeakKronecker} with an approach which heavily uses Group Theory and Representation Theory. The fact that Weak Kronecker Equivalence and Locally GCD Equivalence are the same thing is proven in Lochter's paper.\\
This relation has been called \textit{Local GCD Equivalence} by Linowitz, McReynolds and Miller  \cite{linowitzMcReynoldsMiller}. Nonetheless, it was not a new concept: Lochter \cite{lochterWeakKronecker} already introduced this equivalence (although without giving it a name) and showed that is equivalent to a different relation, called \textit{Weak Kronecker Equivalence}, which was his object of investigation. Lochter's work \cite{lochterWeakKronecker} exploited an approach which consistently relied on Group Theory and representation Theory, and that allowed him to get the following rigidity result.
\begin{thm}\label{LocalGCDEquivalence}
Let $K/F$ and $L/F$ be locally GCD equivalent over $F$ and such that $[K:F], [L:F]\leq 5$. Then $K$ and $L$ are $F$-isomorphic.
\end{thm}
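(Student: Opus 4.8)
The plan is to pass from fields to groups via a common Galois closure and Chebotarev's theorem, and then to settle a finite group-theoretic question by using densities of prime sets as the separating invariant. Fix a number field $M$ that is Galois over $F$ and contains both $K$ and $L$ (the compositum of the two Galois closures), and set $G=\Gal(M/F)$, $H=\Gal(M/K)$, $H'=\Gal(M/L)$, so that $[G:H]=[K:F]$ and $[G:H']=[L:F]$. The Galois dictionary gives that $K$ and $L$ are $F$-isomorphic exactly when $H$ and $H'$ are conjugate in $G$, equivalently when $G/H$ and $G/H'$ are isomorphic as $G$-sets. For an unramified prime $\pp\subset\Of$ with Frobenius class represented by $\sigma\in G$, the splitting type of $\pp$ in $K$ is the list of cycle lengths of $\sigma$ acting on $G/H$, so $\gcd(f_{i,K}(\pp))$ equals $d_H(\sigma):=\gcd$ of those cycle lengths. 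Since by Chebotarev every conjugacy class of $G$ is the Frobenius class of a positive-density set of unramified primes, local GCD equivalence is equivalent to the identity
\[
d_H(\sigma)=d_{H'}(\sigma)\qquad\text{for every }\sigma\in G .
\]
Summing over conjugacy classes, this also yields equality of the densities $|\{\sigma\in G:\, m\mid d_H(\sigma)\}|/|G|$ for every integer $m$, and these densities are the tool I would lean on.

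Next I would fix the degrees and clear the easy ranges. The key elementary remark is that for a transitive action of degree $n\le 5$ the value $d_H(\sigma)$ is highly constrained: for $n\in\{2,3,5\}$ one has $d_H(\sigma)=n$ precisely when $\sigma$ acts as a single $n$-cycle (that is, $\pp$ is inert) and $d_H(\sigma)=1$ otherwise, while for $n=4$ the only values are $d_H(\sigma)\in\{1,2,4\}$, with $4$ arising from a $4$-cycle and $2$ from a product of two transpositions. In particular, when the degree is prime the maximal value attained by $d_H$ on $G$ records that degree, and a transitive group of prime degree $p$ has order divisible by $p$, hence by Cauchy's theorem contains a $p$-cycle; so every nontrivial prime-degree field actually attains its maximal value on a set of positive density. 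This lets me match the degrees in the prime case and, together with the density bookkeeping of the first step, rule out all cross-degree coincidences (degree $4$ never attains the values $3$ or $5$, so it cannot be equivalent to a cubic or quintic; and it cannot be equivalent to a quadratic, whose inert locus has density exactly $1/2$, because the only quartic groups without $4$-cycles, namely $V_4$ and $A_4$, give a $\{d=2\}$-locus of density $3/4$ and $1/4$ respectively). The degenerate case $K=F$ is handled the same way: any nontrivial extension of degree $\le 5$ has a prime with $d_H(\sigma)>1$, so equivalence with $F$ forces the partner to be $F$ as well.

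For equal prime degree $p\in\{2,3,5\}$ the pointwise identity says that $\sigma$ is a $p$-cycle on $G/H$ if and only if it is a $p$-cycle on $G/H'$; this inert locus identifies the permutation group realized on the two coset spaces and forces the two Galois closures to coincide. The argument then reduces to the observation, verified case by case, that every transitive subgroup of $S_p$ with $p\in\{2,3,5\}$ has exactly one conjugacy class of subgroups of index $p$ (the point stabilizers), whence $H\sim_G H'$. This recovers the classical statement emphasized in the introduction, that a field of prime degree $\le 5$ is determined by its inert primes, just as in the quadratic case.

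The genuinely delicate part, and the one I expect to be the main obstacle, is degree $4$. Here $d_H$ takes three values and no longer determines the full cycle type, so I would first use the densities of the loci $\{d_H=4\}$ and $\{d_H=2\}$ to separate the five transitive subgroups of $S_4$: a direct computation gives the pairs $(1/2,1/4)$, $(0,3/4)$, $(1/4,3/8)$, $(0,1/4)$, $(1/4,1/8)$ for $C_4,V_4,D_4,A_4,S_4$, which are pairwise distinct, so equality of these densities already forces the two associated permutation groups to be isomorphic. The real work is to upgrade this from an abstract coincidence of groups to an equivalence of the two $G$-actions $G/H\cong G/H'$: isomorphic quotient groups do not by themselves yield conjugate subgroups, and it is exactly the full pointwise identity $d_H(\sigma)=d_{H'}(\sigma)$ for all $\sigma$ (not merely the densities) that must be invoked to align the two quotient maps and conclude $H\sim_G H'$. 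Establishing that this upgrade goes through for every admissible configuration in degree $4$—and that the scarcity of cycle types at degrees $\le 4$ is what makes it go through—is where the effort of the proof is concentrated; once it is in place, equality of all the GCD data forces $G/H\cong G/H'$, hence $K\cong_F L$.
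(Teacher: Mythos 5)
Your setup is sound and much of the bookkeeping is correct: the translation of local GCD equivalence into the pointwise identity $d_H(\sigma)=d_{H'}(\sigma)$ inside $G=\Gal(M/F)$ via Chebotarev, the elimination of cross-degree coincidences, and the separation of the five transitive subgroups of $S_4$ by the density pairs of $\{d=4\}$ and $\{d=2\}$ are all right. (Note that this group-theoretic reformulation is essentially Lochter's original framework, which the paper deliberately avoids in favour of arguments phrased directly in terms of subfields and prime densities.) But the proposal has a genuine gap exactly at its core, in two places. For equal prime degree you assert that the common inert locus ``identifies the permutation group realized on the two coset spaces and forces the two Galois closures to coincide.'' That is precisely the statement requiring proof, and it is not formal: the GCD data does not see the kernel of the action on $G/H$ --- an element with cycle type $(1,1,1,1,1)$ and one with cycle type $(1,2,2)$ both have $d=1$ --- so nothing in your identity directly pins down the two normal subgroups whose fixed fields are $\widehat{K}$ and $\widehat{L}$. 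The paper has to work for this: for $D_5$ it first extracts the quadratic companions and shows $K_2=L_2$ because the common inert locus has density $2/5>1/4$ (Proposition \ref{Quadratic}), and only then recovers $\widehat{K}=\widehat{L}$ from the splitting primes of $K_2$; for $A_5$ and $S_5$, where no proper companion exists, it argues on the compositum: if $\widehat{K}\cap\widehat{L}=F$, the Frobenius classes in $G_K\times G_L$ whose two components are simultaneously $5$-cycles have density $(2/5)^2$ rather than the $2/5$ forced by the equivalence, a contradiction. Neither argument, nor any substitute, appears in your text.

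The same problem is even more visible in degree $4$, where you explicitly defer ``the real work'' of upgrading the coincidence of abstract permutation groups to conjugacy of $H$ and $H'$ in $G$. That upgrade \emph{is} the theorem in this degree: the paper needs the full chain $K_2=L_2$, then $K_{\sigma}=L_{\sigma}$ and $K_{\sigma^2}=L_{\sigma^2}$, then $\widehat{K}=\widehat{L}$, and finally one more density count on primes with Frobenius class $\{\tau,\sigma^2\tau\}$ to exclude $L\simeq K'$ inside the $D_4$-lattice (the two non-conjugate non-Galois quartics in $\widehat{K}$ have the same inert primes, so the $(2,2)$ versus $(1,1,2)$ distinction is indispensable here). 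As written, your proposal correctly locates the difficulty but does not resolve it, so the theorem is not proved; filling the gap would require either the paper's companion/big-closure arguments or an equivalent case-by-case verification of the group-theoretic statement you isolate.
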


Actually it is possible to translate this statement into a simpler one: in fact, Theorem 1 is equivalent to say that \textit{number fields extensions of degree 2, 3 and 5 are uniquely determined by their inert primes, while number fields extensions of degree 4 are uniquely determined by their inert primes plus the primes with splitting type $(2,2)$}.\\
This alternative expression for Theorem 1 suggests that there could be a way to prove it which is different from Lochter's proof: an idea could be to notice that, if $K$ and $L$ are locally GCD equivalent over $F$, then they have the same splitting types over ``too many primes of a certain kind". Thus one could wonder if it is possible to recover Theorem 1 by means of some results concerning the density of sets of prime numbers: this would be interesting because it would allow to explain a result concerning rigidity properties of number fields by means of simpler tools.\\
The aim of this note is to show that it is indeed possible to prove Theorem 1 using only prime densities and basic Group and Galois Theory: in fact, basic characterizations of the subgroups of symmetric groups $S_n$ (with $n\leq 5$) and well known lemmas of Galois Theory are enough for our purpose.\\
Moreover, although there are many distinct cases to consider for the proof, they can be all solved using mainly two techniques. We specify which technique is used by means of the following notations:
\begin{itemize}
    \item: this symbol denotes the first approach, which consists in reducing the study of two equivalent extensions $K/F$ and $L/F$ at looking for an equivalence of some \textbf{Galois companions} of $K$ and $L$, i.e. some Galois extensions over $F$ which are naturally related to the original fields and have small degree (e.g: if $K/F$ has degree 3 and is not Galois, its Galois closure contains a unique quadratic extension $K_2/F$, which is the companion of $K$).
    \item[**]: this symbol denotes a different approach, which we call \textbf{big Galois closure}: instead of looking for some Galois extension of low degree, one considers a big Galois extension containing both the equivalent extensions $K/F$ and $L/F$, and proves the isomorphism working in this larger setting. We use this technique to deal with the cases where one of the extensions involved is primitive, i.e. has only $F$ and itself as $F$-sub-extensions.\\
\end{itemize}

\subsection{Notation and definitions}
Given a number field extension $L/F$ and an unramified prime $\mathfrak{p}\subset\Of$, its \textbf{splitting type} is the $t$-ple $f_L(\p):=(f_{1,L}(\mathfrak{p}),\ldots,f_{t,L}(\mathfrak{p}))$ given by the inertia degrees $f_{1,L}\leq f_{2,L}\leq\cdots f_{t,L}$ of the prime factors $\mathfrak{q}_1,\ldots,\mathfrak{q}_t$ of $\mathcal{O}_L$ lying over $\mathfrak{p}$.\\
If $\pp$ is unramified and $f_K(\pp)=(1,\ldots,1)$, then $\pp$ is said to be a \textbf{splitting prime.}

Given a set $A$ of prime ideals of $\Of$, its \textbf{prime density} is the number (if it exists)
$$\delta_{\p}(A):=\lim_{x\rightarrow+\infty}\frac{\#\{\mathfrak{p}\in A\colon \Nm(\mathfrak{p})\leq x\}}{\#\{\mathfrak{p}\subset \Of\colon \Nm(\mathfrak{p})\leq x\}} = 
\lim_{x\rightarrow+\infty}\frac{\#\{\mathfrak{p}\in A\colon \Nm(\mathfrak{p})\leq x\}}{x/\log x}.$$
As usual, a property $P$ holds \textbf{for almost all primes in a set $A$ of primes} if $P$ holds for every prime in $A$ up to a subset of $A$ with null prime density (in particular, whenever the set of exceptions is finite).

Given $\mathfrak{p}\subset\Of$, the \textbf{residue field of $\mathfrak{p}$} is the finite field $\mathbb{F}_{\mathfrak{p}}:=\mathcal{O}_F/\mathfrak{p}$.
\\
Given a finite Galois extension $L/F$ with Galois group $G$, an unramified prime $\mathfrak{p}\subset\Of$ and the prime factors $\q_1,\ldots,\q_t$ of $\pp\Ol$, the \textbf{decomposition group of $\q_i$} is the set $G_{\q_i}:=\{\sigma\in G\colon \sigma(\q_i)=\q_i\}$. The $G_{\q_i}$'s are cyclic subgroups of $G$ and are conjugated between them. \\
For every $i=1,\ldots,t$ there is a group isomorphism 
$$\Psi_i : G_{\q_i} \rightarrow \Gal(\mathbb{F}_{\mathfrak{q}_i}/\mathbb{F}_{\mathfrak{p}})= \langle\phi_i \rangle$$
where $\phi_i:\mathbb{F}_{\q_i}\rightarrow \mathbb{F}_{\q_i}$ is the Frobenius automorphism of the finite field $\mathbb{F}_{\q_i}$.\\
The \textbf{Frobenius symbol of $\mathfrak{p}$} is the conjugation class $(L/F,\mathfrak{p}):=\{\Psi_i^{-1}(\phi_i)\colon i=1,\ldots,t\}.$
%Here is a brief sketch of the note.\\
%In Section 2 we present the lemmas and tools which will be used throughout the note, and we recall how locally GCD equivalence for quadratic number fields provides the isomorphism.\\
%In sections 3, 4 and 5 we study locally GCD equivalence between number fields of degree 3, 4 and 5 respectively: again, we will show that it is possible to recover the isomorphism using this simple density setting.\\
%Finally, in Section 6 we give some remarks.

\subsection{Acknowledgements} The author thanks Harry Smit from University of Utrecht, for several discussions about Local GCD Equivalence, Sandro Bettin from University of Genova for the suggestion that prime densities could be relevant for this problem, and Simone Maletto, for interesting insights about this topics.

\section{Key Lemmas and first characterizations}
\subsection{Technical tools}
Let us begin recalling the only ``heavy" theorem needed, which is the classic Chebotarev's Theorem, necessary for any density argument involving primes in number fields.

\begin{thm}[Chebotarev]
Let $L/F$ be a finite Galois extension of number fields with Galois group $G$. Let $C\subset G$ be a conjugation class in the group. Then, the set of primes $\mathfrak{p}\subset\Of$ such that $(L/F,\mathfrak{p}) = C$ is infinite and has prime density equal to $\#C/\#G$.
\end{thm}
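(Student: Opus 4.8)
The statement to be established is Chebotarev's density theorem, which is the single genuinely analytic ingredient of the paper; accordingly the plan is to sketch the classical proof resting on the behaviour of $L$-functions at $s=1$, rather than the elementary group-theoretic methods used later. The overall strategy is a two-part argument: first dispatch the abelian (in fact cyclic) case analytically, and then descend from a cyclic subextension back to $L/F$ by a combinatorial counting argument in the spirit of Deuring.

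First I would treat the cyclic case. Suppose $L/E$ is cyclic with group $H=\langle\sigma\rangle$ of order $m$, and let $\widehat H$ denote its character group. For each $\chi\in\widehat H$ one forms the associated $L$-function $L(s,\chi)$, whose logarithm satisfies
\[
\log L(s,\chi)=\sum_{\PP}\frac{\chi\big((L/E,\PP)\big)}{\Nm(\PP)^{s}}+O(1)\qquad(s\to 1^{+}),
\]
the sum running over the primes $\PP$ of $E$ unramified in $L$, with the $O(1)$ absorbing the ramified primes and the higher prime-power contributions. The decisive analytic input is that for the trivial character $L(s,1)=\zeta_E(s)$ has a simple pole at $s=1$, while for every nontrivial $\chi$ the function $L(s,\chi)$ is holomorphic and non-vanishing at $s=1$, so that $\log L(s,\chi)$ stays bounded there. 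Applying the orthogonality relations of the characters of $H$ to isolate the fixed element $\sigma$ gives
\[
\sum_{\substack{\PP\text{ unram.}\\ (L/E,\PP)=\sigma}}\frac{1}{\Nm(\PP)^{s}}=\frac{1}{m}\sum_{\chi}\overline{\chi(\sigma)}\,\log L(s,\chi)+O(1)\sim\frac{1}{m}\,\log\frac{1}{s-1},
\]
so the primes of $E$ with prescribed Frobenius $\sigma$ have density $1/m$, which is the abelian case of the theorem.

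Next comes the descent from $E=L^{\langle\sigma\rangle}$ down to $F$. Given the conjugacy class $C$, one picks $\sigma\in C$, sets $H=\langle\sigma\rangle$ and $E=L^{H}$, and restricts attention to the primes $\PP$ of $E$ of residue degree one over $F$ with $(L/E,\PP)=\sigma$; only these degree-one primes matter, since primes of $E$ of higher residue degree over $F$ form a set of density zero and do not affect the pole at $s=1$. For such a prime, $\Nm(\PP)=\Nm(\pp)$ with $\pp=\PP\cap\Of$, and the condition $(L/E,\PP)=\sigma$ forces $(L/F,\pp)=C$. A combinatorial bookkeeping, expressing the number $N(\pp)$ of degree-one primes $\PP$ lying over a given $\pp$ with Frobenius $\sigma$ in terms of $\#H$ and the centralizer of $\sigma$, shows that $N(\pp)$ is a constant $N_{0}$ for all $\pp$ with $(L/F,\pp)=C$. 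Comparing the two Dirichlet series then converts the density $1/m$ over $E$ into the density $(N_{0}m)^{-1}=\#C/\#G$ over $F$, and infinitude is immediate from positivity of this density.

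The main obstacle is the analytic heart of the cyclic case, namely proving that the nontrivial $L(s,\chi)$ extend holomorphically to $s=1$ and, above all, do not vanish there; this is the generalization of Dirichlet's non-vanishing theorem and is precisely why Chebotarev's theorem is imported as the one ``heavy'' tool rather than proved from scratch. The descent step, by contrast, is elementary but demands care in the multiplicity bookkeeping, so that the factor $\#C/\#G$ — rather than the density of the larger union of conjugacy classes generating the same cyclic subgroup, which is all that Frobenius's weaker theorem yields — emerges correctly.
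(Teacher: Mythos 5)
The paper does not actually prove this theorem: Chebotarev is imported as the single ``heavy'' external tool, with the proof delegated entirely to the reference (Lang, Chapter VIII). Your sketch is therefore doing strictly more than the paper, and it is a correct outline of the classical argument that the cited source itself uses: reduction to the cyclic subextension $E=L^{\langle\sigma\rangle}$, character orthogonality combined with the behaviour of $\log L(s,\chi)$ as $s\to 1^{+}$ to get density $1/m$ for the primes of $E$ with Frobenius $\sigma$, and then the degree-one-prime bookkeeping to descend to $F$. That bookkeeping checks out: the number of degree-one primes of $E$ above a given $\mathfrak{p}$ with $(L/F,\mathfrak{p})=C$ and Frobenius exactly $\sigma$ is $N_{0}=\#C_{G}(\sigma)/m=\#G/(m\,\#C)$, so $(N_{0}m)^{-1}=\#C/\#G$ as you claim, and this is precisely the refinement that separates Chebotarev from Frobenius's weaker theorem. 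The one caveat is that your argument rests on the holomorphy and non-vanishing of the non-trivial $L(s,\chi)$ at $s=1$, which you assert but do not prove; since you flag this explicitly as the analytic heart being imported rather than established, the sketch is honest about where the real work lies and contains no hidden gap, but as written it is an outline of the standard proof rather than a self-contained one --- which is exactly why the paper chose to cite rather than prove the result.
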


\begin{proof}
See Chapter VIII, Section 4, Theorem 10 of \cite{lang2013algebraic}.
\end{proof}

This theorem allows to compute the densities of primes with given splitting types in Galois extensions of number fields: if we are interested in non-Galois extensions, we use the following proposition.

\begin{prop}\label{propGalois}
Let $E/F$ be a finite Galois number field extension with Galois group $G$, and let $L/F$ be an intermediate extension. Let $H:=\Gal(E/L).$\\
Let $X:=\{H,g_1H\ldots,g_r H\}$ be the set of left cosets of $H$. Let $\mathfrak{p}\subset\Of$ and let $g\in G$ be an element of the Frobenius symbol of $\mathfrak{p}$ in $G$. Consider the action of the group generated by $g$ on $X$ given by left multiplication.\\
Then there is a bijection
$$\{\text{orbits of the action}\}\leftrightarrow \{\text{primes of }\Ol\text{ dividing }\mathfrak{p}\}. $$
Moreover, if $(f_1,\ldots,f_t)$ is the $t$-ple representing the size of the orbits, then $f_L(\mathfrak{p})=(f_1,\ldots,f_t).$
\end{prop}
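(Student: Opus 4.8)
The plan is to reduce the decomposition of $\mathfrak{p}$ in $L$ to its decomposition in the Galois extension $E/F$, where the splitting is governed directly by the Frobenius, and then to read off the behaviour in $L$ by an orbit-counting argument. First I would fix a prime $\mathfrak{P}$ of $\mathcal{O}_E$ lying over $\mathfrak{p}$ and let $D := G_{\mathfrak{P}}$ be its decomposition group; since $\mathfrak{p}$ is unramified, $D$ is cyclic with ramification index $1$, and a generator of $D$ is a Frobenius element whose conjugacy class is exactly the Frobenius symbol $(E/F,\mathfrak{p})$. Conjugate elements act on $X$ with the same orbit structure (conjugation merely relabels the cosets), so it suffices to prove the statement for this distinguished generator $g$ of $D$, which is what the hypothesis on $g$ allows.

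Next I would establish the bijection. The group $G$ acts transitively on the primes of $\mathcal{O}_E$ over $\mathfrak{p}$ with stabilizer $D$ at $\mathfrak{P}$, identifying these primes with the left cosets $G/D$ via $\sigma D\mapsto\sigma\mathfrak{P}$; and since $H=\Gal(E/L)$ acts transitively on the primes of $\mathcal{O}_E$ above any fixed prime of $\mathcal{O}_L$, the primes of $\mathcal{O}_L$ over $\mathfrak{p}$ correspond to the $H$-orbits on $G/D$, that is, to the double cosets $H\backslash G/D$. The inversion $\sigma\mapsto\sigma^{-1}$ identifies $H\backslash G/D$ with $D\backslash G/H$, and the latter are precisely the orbits of $\langle g\rangle = D$ acting on $X=G/H$ by left multiplication; this produces the required bijection between orbits and primes of $\mathcal{O}_L$ dividing $\mathfrak{p}$.

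It remains to match each orbit size with the corresponding inertia degree, which I expect to be the delicate step. For the prime $\mathfrak{Q}$ of $L$ attached to the class of $\sigma$, I would work in the tower $E/L/F$: the decomposition group of $\sigma\mathfrak{P}$ over $F$ is $\sigma D\sigma^{-1}$, while over $L$ it is $\sigma D\sigma^{-1}\cap H$, so, using that $\mathfrak{p}$ is unramified,
$$f(\mathfrak{Q}/\mathfrak{p}) = \frac{f(\sigma\mathfrak{P}/\mathfrak{p})}{f(\sigma\mathfrak{P}/\mathfrak{Q})} = \frac{|D|}{|\sigma D\sigma^{-1}\cap H|}.$$
On the other hand, the $\langle g\rangle$-orbit of $\sigma^{-1}H$ has cardinality the least $f_0>0$ with $\sigma g^{f_0}\sigma^{-1}\in H$; since the exponents $f$ with $\sigma g^{f}\sigma^{-1}\in H$ form the subgroup $f_0\Z$ of $\Z$ and $\sigma D\sigma^{-1}\cap H$ consists exactly of the elements $\sigma g^{f}\sigma^{-1}$ with such $f$, one finds $|\sigma D\sigma^{-1}\cap H| = |D|/f_0$. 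Comparing with the display yields $f(\mathfrak{Q}/\mathfrak{p}) = f_0$, so the inertia degree equals the orbit size. The main obstacle is the careful bookkeeping of left versus right cosets and of the inversion used to pass between $H\backslash G/D$ and $D\backslash G/H$; the rest is the standard behaviour of decomposition groups in a tower, available precisely because $\mathfrak{p}$ is unramified.
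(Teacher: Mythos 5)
Your argument is correct: the reduction to a fixed Frobenius generator of the decomposition group $D$, the identification of primes of $\mathcal{O}_L$ over $\mathfrak{p}$ with the double cosets $H\backslash G/D \cong D\backslash G/H$, and the computation $f(\mathfrak{Q}/\mathfrak{p}) = |D|/|\sigma D\sigma^{-1}\cap H| = f_0$ all check out, including the left/right coset bookkeeping via $\sigma\mapsto\sigma^{-1}$. The paper does not prove this proposition itself but only cites Janusz (Chapter III, Prop.\ 2.8), and your double-coset argument is essentially the standard proof found there, so no comparison is needed beyond noting that your write-up is self-contained where the paper defers to a reference.
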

\begin{proof}
See Chapter III, Prop.2.8 of \cite{janusz}.
\end{proof}
\noindent
Chebotarev's Theorem and Proposition \ref{propGalois} are the tools which allow us to compute the prime densities in number field extensions of degree less or equal than 5 shown in the next sections.
\\

Let us briefly recall a key lemma from Algebraic Number Theory.
\begin{lem}\label{LemmaDecomposizione}
Let $K/F$ and $L/F$ be finite number field extensions and let $KL/F$ be its composite extension. Then $\mathfrak{p}\subset\Of$ splits completely in $KL$ if and only if it splits completely in both $K$ and $L$.
\end{lem}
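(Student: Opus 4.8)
The plan is to reduce both implications to the behaviour of decomposition groups in a single Galois extension of $F$ containing $KL$, so that ``splitting completely'' becomes the purely group-theoretic condition that a Frobenius element be trivial. The forward implication is the easy one: if $\pp$ splits completely in $KL$ then, since $F\subseteq K\subseteq KL$ and both the ramification index and the inertia degree are multiplicative in towers, every prime of $K$ above $\pp$ sits below a prime of $KL$ over $\pp$ with $e=f=1$, forcing $e=f=1$ in $K$ as well; the same argument applied to $F\subseteq L\subseteq KL$ shows $\pp$ splits completely in $L$. Thus the substance of the lemma is the reverse implication.

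For the reverse direction I would work inside $E:=\widetilde{K}\,\widetilde{L}$, the compositum of the Galois closures of $K$ and $L$ over $F$, which is Galois over $F$ with group $G:=\Gal(E/F)$ and contains $KL$. Assuming $\pp$ splits completely in both $K$ and $L$, it is in particular unramified in $K$ and in $L$; since the conjugates of $K$ are $F$-isomorphic to $K$ and unramifiedness is preserved under composita, $\pp$ is unramified in $\widetilde{K}$, in $\widetilde{L}$, and hence in $E$. We may therefore fix a prime of $E$ above $\pp$ and its Frobenius element $g\in G$, and apply Proposition \ref{propGalois} to the various intermediate fields. Splitting completely in $K$ means that $\langle g\rangle$ acts trivially on the cosets $G/\Gal(E/K)$, i.e. $\sigma^{-1}g\sigma\in\Gal(E/K)$ for every $\sigma\in G$; equivalently $g$ lies in the normal core $\bigcap_{\sigma}\sigma\,\Gal(E/K)\,\sigma^{-1}=\Gal(E/\widetilde{K})$. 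The same reasoning for $L$ gives $g\in\Gal(E/\widetilde{L})$, so that
\[
g\in \Gal(E/\widetilde{K})\cap\Gal(E/\widetilde{L})=\Gal\bigl(E/\widetilde{K}\widetilde{L}\bigr)=\Gal(E/E)=\{1\}.
\]
A trivial Frobenius means $\pp$ splits completely in $E$, and therefore in the sub-extension $KL$, which is exactly what we want.

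The step I expect to require the most care is this reverse implication, precisely because $[KL:F]$ is in general a proper divisor of $[K:F][L:F]$, so one cannot simply multiply the splitting data of $K$ and $L$ to read off that of $KL$. The device that circumvents this is passing from each field to its Galois closure inside $E$: complete splitting in $K$ is insensitive to this passage, since both translate into the condition that $g$ lie in the normal core $\Gal(E/\widetilde{K})$, and the intersection of the two cores is forced to be trivial once $E$ itself is taken to be $\widetilde{K}\widetilde{L}$. I would double-check the two standard facts used implicitly, namely that complete splitting in a subfield is equivalent to complete splitting in its Galois closure (this is the orbit computation above) and that the normal core of $\Gal(E/K)$ equals $\Gal(E/\widetilde{K})$; both follow from Proposition \ref{propGalois} together with elementary Galois theory, keeping within the light toolkit advertised in the introduction.
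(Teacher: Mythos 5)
Your proposal is correct. Note, though, that the paper does not actually prove this lemma: it simply cites Janusz (Chapter III, Prop.\ 2.5, 2.6), so any self-contained argument is ``different'' from the paper by default. Your argument is sound in both directions: the forward implication via multiplicativity of $e$ and $f$ in towers (together with the lying-over theorem, which guarantees that every prime of $K$ above $\pp$ is dominated by some prime of $KL$) is standard, and the reverse implication correctly identifies complete splitting in $K$ with the condition that the Frobenius $g$ lie in the normal core $\bigcap_\sigma \sigma\,\Gal(E/K)\,\sigma^{-1}=\Gal(E/\widetilde{K})$, whose intersection with $\Gal(E/\widetilde{L})$ is trivial when $E=\widetilde{K}\widetilde{L}$. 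You are also right to first verify that $\pp$ is unramified in $E$, since the Frobenius symbol and Proposition \ref{propGalois} are only available for unramified primes, and you avoid the potential circularity with Corollary \ref{coroSplit} (which the paper deduces \emph{from} this lemma) by proving the relevant equivalence directly through the orbit computation. What your route buys is a proof entirely within the light toolkit the paper advertises --- decomposition groups, Frobenius elements, and elementary Galois theory --- at the cost of passing to the possibly much larger field $E$ rather than arguing directly with the decomposition groups of $KL$ itself, as Janusz does.
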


\begin{proof}
See Chapter III, Prop. 2.5, 2.6 of \cite{janusz}. 
\end{proof}

\begin{coro}\label{coroSplit}
Let $K/F$ be a finite number field extension and let $\widehat{K}/F$ be its Galois closure with group $G$. Then an unramified prime $\mathfrak{p} \subset\Of$ splits completely in $K$ if and only if it splits completely in $\widehat{K}$.
\end{coro}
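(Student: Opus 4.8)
The plan is to prove the two implications separately, with the forward implication being immediate and the reverse one carrying the real content. For the direction ``$\widehat{K}\Rightarrow K$'', I would note that $K$ is an intermediate field $F\subseteq K\subseteq\widehat{K}$, so complete splitting in $\widehat{K}$ forces complete splitting in $K$: the ramification and inertia degrees are multiplicative in towers, hence if every prime of $\widehat{K}$ above $\mathfrak{p}$ has $e=f=1$, then the same holds for the intermediate primes in $K$. Equivalently, since $K\widehat{K}=\widehat{K}$, this is just the ``only if'' half of Lemma \ref{LemmaDecomposizione} applied to the pair $K,\widehat{K}$.

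For the substantial direction ``$K\Rightarrow\widehat{K}$'', the key structural fact I would use is that the Galois closure is the composite of the conjugate fields of $K$: writing $\sigma_1,\ldots,\sigma_m$ for coset representatives of $\Gal(\widehat{K}/K)$ in $G$ and $K_i:=\sigma_i(K)$, one has $\widehat{K}=K_1\cdots K_m$. First I would observe that each $\sigma_i$ restricts to an $F$-isomorphism $K\xrightarrow{\sim}K_i$, and that the splitting type of a fixed prime $\mathfrak{p}\subset\Of$ is an invariant of the $F$-isomorphism class of the extension. Consequently $\mathfrak{p}$ has the same splitting type in every conjugate $K_i$ as in $K$; in particular, if $\mathfrak{p}$ splits completely in $K$ it splits completely in each $K_i$. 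Applying Lemma \ref{LemmaDecomposizione} inductively to the finitely many fields $K_1,\ldots,K_m$, I conclude that $\mathfrak{p}$ splits completely in the composite $\widehat{K}$, which is what we want.

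The step I expect to be the main obstacle is making precise that the splitting type is preserved under the $F$-isomorphism of the conjugate fields; this is where the argument really uses that conjugation by $\sigma_i$ is an automorphism of $\widehat{K}$ fixing $F$, so that it carries the primes of $\mathcal{O}_K$ above $\mathfrak{p}$ to the primes of $\mathcal{O}_{K_i}$ above $\mathfrak{p}$ while preserving residue degrees. A cleaner alternative, more in the spirit of the present paper, would bypass the conjugate fields entirely and argue through Proposition \ref{propGalois}: with $E=\widehat{K}$, $L=K$ and $H=\Gal(\widehat{K}/K)$, a prime splits completely in $K$ exactly when a Frobenius element $g$ fixes every coset in $X=G/H$, i.e. when $g\in\bigcap_{\sigma\in G}\sigma H\sigma^{-1}$. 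Since $\widehat{K}$ is the Galois closure, $H$ is core-free and this intersection is trivial, so $g=e$; but $g=e$ is precisely the condition that $\mathfrak{p}$ splits completely in $\widehat{K}$. Either route yields the equivalence, and I would likely present the composite-field argument as primary, since it rests directly on the just-proved Lemma \ref{LemmaDecomposizione}.
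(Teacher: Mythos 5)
Your primary argument is exactly the paper's proof: the Galois closure is the compositum of the conjugates $\sigma(K)$, a completely split prime in $K$ is completely split in each conjugate, and Lemma \ref{LemmaDecomposizione} finishes the job. Your additional care with the easy direction and the alternative core-free-subgroup argument via Proposition \ref{propGalois} are both fine, but the route is essentially the same as the paper's.
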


\begin{proof}
We know that $\widehat{K}$ is the compositum field of all the fields $\sigma(K)$ where $\sigma\in\Gal(L\widehat{K}/F)$; but if a prime splits completely in $K$, it must be totally slit in $\sigma(K)$ as well. The claim follows then from Lemma \ref{LemmaDecomposizione}.
\end{proof}
\noindent

\begin{coro}\label{SplittingGalois}
Let $K/F$ and $L/F$ be finite Galois extensions of number fields and assume that they share the same set of splitting primes (up to exceptions of null prime density). Then $K$ and $L$ coincide.
\end{coro}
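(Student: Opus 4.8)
The plan is to use the characterization of splitting primes via Chebotarev's Theorem, together with Corollary \ref{SplittingGalois}'s hypothesis that $K$ and $L$ are both Galois over $F$. The key observation is that for a \emph{Galois} extension $K/F$, a prime $\pp$ splits completely precisely when its Frobenius symbol $(K/F,\pp)$ is the trivial conjugacy class $\{1\}$, since complete splitting corresponds to the decomposition groups being trivial. Thus the set of splitting primes of $K$ has a clean density interpretation: by Chebotarev its density equals $1/[K:F]$, and more importantly the \emph{identity} of this set is governed entirely by the extension $K$.

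First I would pass to the compositum $KL/F$. The heart of the argument is Lemma \ref{LemmaDecomposizione}: a prime $\pp$ splits completely in $KL$ if and only if it splits completely in both $K$ and $L$. By hypothesis the splitting primes of $K$ and of $L$ coincide up to a set of null density, so the set of primes splitting completely in $K$ equals, up to null density, the set splitting completely in $L$, which in turn equals the set splitting completely in the intersection of these two conditions, namely the splitting primes of $KL$. Hence $K$, $L$, and $KL$ all share the same splitting primes up to null density.

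Next I would compute densities. Since $KL/F$ is Galois (being a compositum of Galois extensions), Chebotarev gives that its set of splitting primes has density $1/[KL:F]$; likewise the splitting primes of $K$ have density $1/[K:F]$. Because these two sets of primes agree up to null density, their densities are equal, forcing $[KL:F]=[K:F]$. As $K\subseteq KL$, this degree equality yields $K=KL$, i.e. $L\subseteq K$. By the symmetric argument $K\subseteq L$, and therefore $K=L$.

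The main obstacle, and the only genuinely delicate point, is justifying that the density equality $1/[KL:F]=1/[K:F]$ really follows from the two prime sets agreeing \emph{only up to null density} rather than exactly. This is precisely what the ``up to exceptions of null prime density'' hypothesis is designed to handle: two sets that differ by a null-density set have the same density whenever both densities exist, and existence here is guaranteed by Chebotarev applied to the Galois extensions $K/F$ and $KL/F$. One should be slightly careful that the finitely many ramified primes (in $K$, $L$, or $KL$) are also discarded, but these form a finite and hence null set, so they cause no difficulty.
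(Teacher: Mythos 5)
Your proof is correct and follows essentially the same route as the paper: pass to the compositum $KL/F$, use Lemma \ref{LemmaDecomposizione} to identify the splitting primes of $KL$ with the common splitting primes of $K$ and $L$, and apply Chebotarev to deduce $[KL:F]=[K:F]=[L:F]$, hence $K=KL=L$. The extra care you take with the null-density exceptions and ramified primes is a sensible elaboration of what the paper leaves implicit.
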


\begin{proof}
Let $KL/F$ be the composite Galois extension. By the previous lemma it follows, up to exceptions of null prime density,
$$
\{\mathfrak{p}\subset\Of\colon f_{KL}(\mathfrak{p})=(1,\ldots ,1)\} = \{\mathfrak{p}\subset\Of\colon f_{K}(\mathfrak{p}) =(1,\ldots ,1)\text{ and }f_{L}(\mathfrak{p})=(1,\ldots,1)\}.$$
Applying Chebotarev's Theorem, the identity above gives the equality
$$\frac{1}{[K:F]}= \frac{1}{[KL:F]} = \frac{1}{[L:F]}$$
which immediately implies $K=KL=L.$
\end{proof}

\subsection{Equivalence in degree 2}
We look now at the (easy) study of local GCD equivalence between quadratic fields, and we give also some density result concerning these fields.\\
Remember that the only splitting types available for a quadratic field are $(1,1)$ and $(2).$
\begin{prop}\label{Quadratic}
Let $K$ and $L$ be two quadratic fields over $F$.
\begin{itemize}
    \item[1)] If $K$ and $L$ are locally GCD equivalent over $F$, then they are $F$-isomorphic.
    \item[2)] If $\{\mathfrak{p}\subset\Of\colon f_K(\mathfrak{p})=f_L(\mathfrak{p})=(1,1) \}$ has prime density strictly greater than $1/4$, then $K$ and $L$ are $F$-isomorphic.
    \item[3)] The set $\{\pp\subset\Of\colon f_K(\pp)=f_L(\pp)\}$ has prime density $\geq 1/2$. $K$ and $L$ are equal if and only if the strict inequality holds.
\end{itemize}
\end{prop}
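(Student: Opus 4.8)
The plan is to treat the three claims of Proposition \ref{Quadratic} by reducing everything to the Galois extension generated by $K$ and $L$ and then applying Chebotarev's Theorem to count primes by splitting behaviour. Throughout I would work up to exceptions of null prime density, so that ramified primes are harmless. Since quadratic extensions are Galois, I would use the compositum $KL/F$ as the ambient field. If $K$ and $L$ are distinct quadratic fields, then $\Gal(KL/F)\cong (\Z/2\Z)^2$ and $[KL:F]=4$; if instead $K=L$, then $[KL:F]=2$. The Frobenius symbol of an unramified prime $\pp$ is a single element of the Galois group (the group being abelian), and by Proposition \ref{propGalois} the splitting type of $\pp$ in $K$ is determined by whether the image of this Frobenius in $\Gal(K/F)$ is trivial ($f_K(\pp)=(1,1)$) or not ($f_K(\pp)=(2)$).

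For part 1), I would argue by contrapositive: suppose $K\not\cong_F L$, hence $K\neq L$ as distinct quadratic extensions, and I will exhibit a prime violating the local GCD equality. In degree $2$, the GCD of the splitting type is $1$ for type $(1,1)$ and $2$ for type $(2)$, so the local GCD condition is exactly the demand that $K$ and $L$ have the same set of inert primes (up to sets of density zero in the statement of the equivalence, but here the condition is for \emph{every} unramified prime). With $\Gal(KL/F)\cong(\Z/2\Z)^2$, pick the Frobenius class corresponding to an element that is nontrivial on $K$ but trivial on $L$; such an element exists because the three nontrivial elements of $(\Z/2\Z)^2$ restrict, one each, to the nontrivial element of exactly one of the three quadratic subfields. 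By Chebotarev this class is realized by a positive density of primes, and any such prime is inert in $K$ but split in $L$, so $\gcd f_K(\pp)=2\neq 1=\gcd f_L(\pp)$, contradicting local GCD equivalence. Hence $K=L$.

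For parts 2) and 3), I would simply compute the relevant densities directly from the four-element Galois group when $K\neq L$. The primes with $f_K(\pp)=f_L(\pp)=(1,1)$ correspond to the Frobenius being the identity in $\Gal(KL/F)$, which by Chebotarev has density $1/4$; thus if the density of such primes is strictly greater than $1/4$ we cannot be in the case $K\neq L$, forcing $K=L$ and proving 2). For 3), the set $\{\pp\colon f_K(\pp)=f_L(\pp)\}$ when $K\neq L$ is exactly the set of primes whose Frobenius lies in $\{\mathrm{id},\tau\}$, where $\tau$ is the element nontrivial on both $K$ and $L$ (equivalently, trivial on the third quadratic subfield of the compositum); indeed $f_K(\pp)=f_L(\pp)$ holds precisely when the Frobenius restricts trivially to both or nontrivially to both. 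This is a subgroup of order $2$, so by Chebotarev its density is $2/4=1/2$. When $K=L$ the agreement is total and the density is $1$. This gives the inequality $\geq 1/2$ with equality exactly in the case $K\neq L$, i.e. strict inequality exactly when $K=L$.

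The main obstacle, though a mild one, is the bookkeeping for part 3): one must verify carefully that $f_K(\pp)=f_L(\pp)$ fails precisely for the two Frobenius elements that restrict nontrivially to exactly one of $K,L$, and that these are the two elements outside the order-two subgroup identified above. This is a finite check on the character table of $(\Z/2\Z)^2$, matching each of the three nontrivial elements to the quadratic subfield on which it acts trivially, but it is the only place where one genuinely uses the internal structure of the Klein four-group rather than a bare density count.
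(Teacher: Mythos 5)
Your proof is correct and follows essentially the same route as the paper: everything reduces to the compositum $KL/F$ plus Chebotarev, with part 2) being a verbatim match and part 3) identifying the agreement set with the splitting primes of the third quadratic subfield of $KL$ (the paper writes this field explicitly as $F[x]/(x^2-\alpha\beta)$, you describe it as the fixed field of the order-two subgroup $\{\mathrm{id},\tau\}$ --- the same field). One small slip in part 1): each nontrivial element of the Klein four-group restricts nontrivially to exactly \emph{two} of the three quadratic subfields (it is trivial precisely on its own fixed field), not one; the element you actually need is the generator of $\Gal(KL/L)$, which is indeed nontrivial on $K$ and trivial on $L$, so the argument stands as written.
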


\begin{proof}
\begin{itemize}
    \item[1)] Quadratic extensions over $F$ are Galois extensions: if they are locally GCD equivalent, then they have the same set of splitting primes, and thus they are isomorphic by Corollary \ref{SplittingGalois}.
    \item[2)] Assume that $K\neq L$: then their composite field $KL$ is a Galois field of degree 4 over $F$, and it would be
    $$\{\pp\subset\Of\colon f_{KL}(\pp)=(1,1,1,1)\}=\{\pp\subset\Of\colon f_K(\pp)=f_L(\pp)=(1,1) \}.$$
    But this is a contradiction, since the first set has prime density equal to $1/4$, while the second one has a greater density by the assumption.
    \item[3)] Let $K=F[x]/(x^2-\alpha)$ and $L=F[x]/(x^2-\beta)$, with $\alpha\neq\beta$: the set $\{\pp\subset\Of\colon f_K(\pp)=f_L(\pp)\}$ is identified with the set of splitting primes in $F[x]/(x^2-\alpha \beta)$. The claim follows immediately.
\end{itemize}
\end{proof}

\section{Equivalence in degree 3}
\subsection{Galois Groups for cubic fields}
Let $K$ be a field of degree 3 over $F$, and let $\widehat{K}$ be its Galois closure with Galois group $G$. The group $G$ can be one of the following:
\begin{itemize}
    \item[] $G=C_3$, the cyclic group of order 3. Then $K=\widehat{K}$ is a cubic Galois extension over $F$. The only possible splitting types are $(1,1,1)$ and $(3)$, and furthermore
    
    $$\begin{matrix}
         &\delta_{\p}\{\pp\colon f_K(\pp)=(1,1,1)\} &= &1/3,\\\\
    &\delta_{\p}\{\pp\colon f_K(\pp)=(3)\} &= &2/3.
    \end{matrix}$$

    \item[] $G=S_3$, the symmetric group with 6 elements. Then $\widehat{K}$ has degree 6 over $F$, it contains three $F$-conjugated cubic fields and a quadratic extension $K_2/F$.\\
    Furthermore there are infinitely many primes with splitting type $(1,2)$, each one having Frobenius symbol equal to the elements of order 2 in $S_3$.\\
    Looking at the densities in detail, one has:
    $$\begin{matrix}
         &\delta_{\pp}\{p\colon f_K(\pp)=(1,1,1)\} &= &1/6,\\\\
    &\delta_{\p}\{\pp\colon f_K(\pp)=(1,2)\} &= &1/2,\\\\\
    &\delta_{\p}\{\pp\colon f_K(\pp)=(3)\} &= &1/3.
    \end{matrix}$$
    \end{itemize}
All density computations are derived from Chebotarev's Theorem and Proposition \ref{propGalois}. 

\subsection{Locally GCD equivalent cubic fields}
The equivalence problem in this degree can be solved by means of the sole Galoic companions technique. 
\\
Let $K$ and $L$ be two cubic fields over $F$ which are locally GCD equivalent.
\begin{itemize}
\item It is almost immediate to see that if one of them (assume $K$) is Galois, then the other extension is Galois, because of the density of the inert primes. But if $K/F$ and $L/F$ are Galois cubic extensions and are locally GCD equivalent, they have the same splitting primes, and thus $K=L$.\\
\item Let us assume that both $K$ and $L$ are not Galois. Consider their Galois closures $\widehat{K}$ and $\widehat{L}$, and the quadratic Galois companions $K_2$ and $L_2$.\\
Using Proposition \ref{propGalois}, it is easy to show the following correspondence among the splitting types of the fields involved:
\begin{displaymath}
        \begin{tikzcd}
        & & (3, 3)_{\widehat{K}}\arrow{ld}\arrow{rd} &\\
        &(3)_K\arrow{ru} & &(1,1)_{K_2}
        \end{tikzcd}
        \end{displaymath}
One gets the following identity:
\begin{equation}\label{Caso 3}
\{\pp\colon f_{K_2}(\pp)=(1,1), f_K(\pp)=(3)\} = \{\pp\colon f_{L_2}(\pp)=(1,1), f_L(\pp)=(3)\}.
\end{equation}
This implies that $\{\pp\colon f_{K_2}(\pp)=(1,1)=f_{L_2}(\pp)\}$ has prime density greater than $1/3$, and by Proposition \ref{Quadratic} one has $K_2=L_2$.\\
The remaining splitting primes in $K_2$, which have prime density equal to $1/2 - 1/3 = 1/6$, are exactly the ones that split completely in $\widehat{K}$. But this fact, together with $K_2=L_2$ and Equality (\ref{Caso 3}), force $\widehat{K}$ and $\widehat{L}$ to have the same splitting primes, i.e. $\widehat{K}=\widehat{L}$, which in turn implies $K\simeq L$ (because the cubic extensions in $\widehat{K}/F$ are $F$-conjugated between them).
\end{itemize}

\section{Equivalence in degree 4}
\subsection{Galois groups for quartic fields} Let $K$ be a field of degree 4 over $F$, and let $\widehat{K}$ be its Galois closure with Galois group $G$. The group $G$ can be one of the following:
\begin{itemize}
    \item[] $G=C_4$, the cyclic group of order 4. Then $K=\widehat{K}$ is Galois over $F$ and the splitting types and densities are as follows:
       $$\begin{matrix}
         &\delta_{\p}\{\pp\colon f_K(\pp)=(1,1,1,1)\} &= &1/4,\\\\
    &\delta_{\p}\{\pp\colon f_K(\pp)=(2,2)\} &= &1/4,\\\\\
    &\delta_{\p}\{\pp\colon f_K(\pp)=(4)\} &= &1/2.
    \end{matrix}$$
    
    \item[] $G=C_2\times C_2$: then $K=\widehat{K}$ is Galois over $F$ and 
       $$\begin{matrix}
         &\delta_{\p}\{\pp\colon f_K(\pp)=(1,1,1,1)\} &= &1/4,\\\\
    &\delta_{\p}\{\pp\colon f_K(\pp)=(2,2)\} &= &3/4.
    \end{matrix}$$
    
    \item[] $G=D_4:=\langle \sigma,\tau | \sigma^4 =\tau^2 =1, \tau\sigma\tau =\sigma^3\rangle$. Then $\widehat{K}$ has degree 8 over $F$, it contains 5 quartic fields and 3 quadratic fields, the lattice of sub-extensions being as follows:
    \begin{equation}\label{latticeD4}
    \begin{tikzcd} 
    & & &\widehat{K} & &\\
    &\Tilde{K}\arrow{rru} &K\arrow{ru} & K_{\sigma^2}\arrow{u} & K'\arrow{lu} & \Tilde{K'}\arrow{llu}\\
    & &K_2\arrow{lu}\arrow{u}\arrow{ru} & K_{\sigma}\arrow{u} & K_2'\arrow{lu}\arrow{u}\arrow{ru} &\\
    & & &F\arrow{lu}\arrow{u}\arrow{ru} & &
    \end{tikzcd}
    \end{equation}
    The quartic fields form 3 distinct classes of $F$-isomorphism: $\{K,\Tilde{K}\}$, $\{K', \Tilde{K'}\}$ and $\{K_{\sigma^2}\}$. The extension $K_{\sigma^2}/F$ is Galois with Galois group $C_2\times C_2$.\\
    Assuming that $\Gal(\widehat{K}/K)=\langle\tau\rangle$, Proposition \ref{propGalois} yields
    $$\begin{matrix}
         &\delta_{\p}\{\pp\colon f_K(\pp)=(1,1,1,1)\} = \delta_{\p}\{\pp\colon (\widehat{K}/F, \pp)=1_{D_4}\} &= &1/8,\\\\
    &\delta_{\p}\{\pp\colon f_K(p)=(1,1,2)\} = \delta_{\p}\{\pp\colon (\widehat{K}/F, \pp)=\{\tau,\sigma^2\tau\}\} &= &1/4,\\\\
    &\delta_{\p}\{\pp\colon f_K(p)=(2,2), (\widehat{K}/F, \pp)=\sigma^2 \} &= &1/8,\\\\
     &\delta_{\p}\{\pp\colon f_K(p)=(2,2), (\widehat{K}/F, \pp)=\{\sigma\tau, \tau\sigma\} \} &= &1/4,\\\\
     &\delta_{\p}\{\pp\colon f_K(p)=(4)\} = \delta_{\p}\{\pp\colon (\widehat{K}/F, \pp)=\{\sigma,\sigma^3\}\} &= &1/4.
    \end{matrix}$$
    If $\Gal(\widehat{K}/K)=\langle \sigma\tau \rangle$, simply reverse the roles of $\tau$ and $\sigma\tau$ in the description above.
    \\
    \item[] $G=A_4$, the alternating group with 12 elements. Then $\widehat{K}$ has degree 12 over $F$, it contains $4$ quartic fields (each one $F$-conjugated to the others) and a Galois cubic extension $K_3/F$. There are no inert primes, and the splitting types and densities are the following:
       $$\begin{matrix}
         &\delta_{\p}\{\pp\colon f_K(\pp)=(1,1,1,1)\} &= &1/12,\\\\
    &\delta_{\p}\{\pp\colon f_K(\pp)=(1,3)\} &= &2/3,\\\\\
    &\delta_{\p}\{\pp\colon f_K(\pp)=(2,2)\} &= &1/4.
    \end{matrix}$$
    \\
    \item[] $G=S_4$: then $\widehat{K}$ has degree 24 over $F$ and contains a Galois extension $K_6/F$ of degree 6, while all the quartic extensions over $F$ contained in $\widehat{K}$ are $F$-conjugated. The splitting types and decomposition are as follows:
    
           $$\begin{matrix}
         &\delta_{\p}\{\pp\colon f_K(\pp)=(1,1,1,1)\} &= &1/24,\\\\
         &\delta_{\p}\{\pp\colon f_K(\pp)=(1,1,2)\} &= &1/4,\\\\
    &\delta_{\p}\{\pp\colon f_K(\pp)=(1,3)\} &= &1/3,\\\\\
    &\delta_{\p}\{\pp\colon f_K(\pp)=(2,2)\} &= &1/8,\\\\
    &\delta_{\p}\{\pp\colon f_K(\pp)=(4)\} &= &1/4.\\\\
    \end{matrix}$$
\end{itemize}
    All densities are computed with Proposition \ref{propGalois}.\\
    These data immediately show that if $K/F$ and $L/F$ are locally GCD equivalent quartic extensions, then they must have the same Galois closure.
    
\subsection{Locally GCD equivalent quartic fields}  
Just like for the previous degree, searching for Galois companions will be enough to study the equivalence between extensions of degree 4.
\\
As mentioned before, we only study locally GCD equivalent quartic extensions $K/F$ and $L/F$ with same Galois group. This immediately implies that whenever one of the extensions is Galois, then the equivalence is actually an isomorphism.\\

    \begin{itemize}
          \item $G=A_4$: Consider the cubic Galois companions $K_3/F$ and $L_3/F$ associated to $K$ and $L$ respectively. Proposition \ref{propGalois} yields the following behaviour on the splitting types: 
        
         \begin{displaymath}
        \begin{tikzcd}
        & & (2\times 6)_{\widehat{K}}\arrow{ld}\arrow{rd} &\\
        &(2,2)_K\arrow{ru} & &(1,1,1)_{K_3}
        \end{tikzcd}
        \end{displaymath}
        
        Thus one gets the identity
        \begin{equation}\label{A4}
            \{\pp\colon f_{K_3}(\pp)=(1,1,1), f_K(\pp)=(2,2)\} = \{\pp\colon f_{L_3}(\pp)=(1,1,1), f_L(\pp)=(2,2)\}.
        \end{equation}
        The sets above have prime density $1/4$, and this forces $K_3=L_3$; if this was not true, the composite Galois extension $KL/F$ would have degree 9. But being
          $$\{\pp\colon f_{K_3L_3}(\pp)=(1\times 9)\}=\{\pp\colon f_{K_3}(\pp)=f_{L_3}(\pp)=(1,1,1) \},$$
        the left hand side would have prime density equal to $1/9$, which is in contradiction with Equality (\ref{A4}).\\
        The remaining splitting primes in $K_3$ have density $1/3-1/4=1/12$ and are precisely the primes which split completely in the Galois closure $\widehat{K}$. Thus, equality (\ref{A4}) and $K_3=L_3$ force $\widehat{K}$ and $\widehat{L}$ to have the same splitting primes, i.e. $\widehat{K}=\widehat{L}$, which implies $K\simeq L$.\\
        \item The case $G=S_4$ is completely similar: one associates to $K$ the unique Galois sextic extension $K_6/F$ contained in $\widehat{K}$, and using the densities of primes $\pp$ with $f_K(\pp)=(2,2)$ one forces $K_6=L_6$ and from that $\widehat{K}=\widehat{L}$, which in turn gives $K\simeq L$.\\

        \item $G=D_4$: Let us take $K/F$ and $L/F$ locally GCD equivalent quartic extensions with Galois closures $\widehat{K}$ and $\widehat{L}$ and Galois group $D_4$. We follow the notations of diagram (\ref{latticeD4}) for the sub-extensions of $\widehat{K}$ and $\widehat{L}$.\\\\
        Consider the subfield $K_2\subset K$: it is immediate to see that, if $f_K(\pp)=(4)$, then $f_{K_2}(\pp)=(2)$; in the same way, a prime ideal $\pp$ such that $f_K(\pp)\in\{(1,1,1,1),(1,1,2)\}$ has splitting type $f_{K_2}(\pp)=(1,1)$. These facts, together with the local GCD equivalence between $K$ and $L$, yield the equalities:
        \begin{equation}\label{D4caso1}
            \{\pp\colon f_{K_2}(\pp)=(2), f_K(\pp)=(4)\} = \{\pp\colon f_{L_2}(\pp)=(2), f_L(\pp)=(4)\},
        \end{equation}
        \begin{align}\label{D4caso2}
            &\{\pp\colon f_{K_2}(\pp)=(1,1), f_K(\pp)\in\{(1,1,1,1),(1,1,2)\}\} =\nonumber\\
            &\{\pp\colon f_{L_2}(\pp)=(1,1), f_L(\pp)\in\{(1,1,1,1),(1,1,2)\}\}.
        \end{align}
        The sets in Equality (\ref{D4caso1}) have prime density equal to $1/4$, while the ones in Equality (\ref{D4caso2}) have prime density equal to $3/8$. This tells us that $K_2$ and $L_2$ have the same splitting type on at least $5/8$ of the primes, and so $K_2=L_2$ by Proposition \ref{Quadratic}.\\\\
       Let us consider now the field $K_{\sigma}$. Using Proposition \ref{propGalois}, it is possible to show the following behaviour:
       \begin{displaymath}
        \begin{tikzcd}
        & & (4, 4)_{\widehat{K}}\arrow{ld}\arrow{rd} &\\
        &(4)_K\arrow{ru} & &(1,1)_{K_{\sigma}}
        \end{tikzcd}
        \end{displaymath}
        
       Thus one obtains the equality
       \begin{equation}\label{D4splitting}
            \{\pp\colon f_{K_{\sigma}}(\pp)=(1,1), f_K(\pp)=(4)\} = \{\pp\colon f_{L_{\sigma}}(\pp)=(1,1), f_L(\pp)=(4)\}
       \end{equation}
       and the sets above have prime density equal to $1/4$.\\ 
       Furthermore, the set of primes $\{\pp\colon f_K(\pp)=f_L(\pp)=(1,1,1,1)\}$ has positive density $\epsi>0$ (because it corresponds to the set of splitting primes in the composite extension $K L$) and, thanks to the fact that these primes split completely also in $\widehat{K}$ and $\widehat{L}$, it is clear that for any of these primes holds $f_{K_{\sigma}}(\pp)=f_{L_{\sigma}}(\pp)=(1,1).$ This result, together with Equality (\ref{D4splitting}), yields $K_{\sigma}=L_{\sigma}$, and together with $K_2=L_2$ provides $K_{\sigma^2}=L_{\sigma^2}.$
                \\
                Now, we show that $\widehat{K}=\widehat{L}$: one has the equalities
                $$\{\pp\colon f_K(\pp)=(2,2)\} = \{\pp\colon  f_L(\pp)=(2,2)\},$$
                
                $$\{\pp\colon f_{K_{\sigma^2}}(\pp)=(1,1,1,1)\} = f_{L_{\sigma^2}}(\pp)=(1,1,1,1)\}$$
                and the intersection of these sets gives
                $$\{\pp\colon f_{K_{\sigma^2}}(\pp)=(1,1,1,1), f_K(\pp)=(2,2)\} = \{\pp\colon f_{L_{\sigma^2}}(\pp)=(1,1,1,1), f_L(\pp)=(2,2)\}.$$
                The sets above have prime density exactly equal to $1/8$, because they are the primes with $\sigma^2$ as Frobenius symbol. This means that the remaining splitting primes in $K_{\sigma^2}$, which have prime density equal to $1/4-1/8 = 1/8$, identify $\widehat{K}$; but being $K_{\sigma^2}=L_{\sigma^2}$, this means that $\widehat{K}$ and $\widehat{L}$ have the same splitting primes, i.e. $\widehat{K}=\widehat{L}$.\\
                Finally, we show that $K\simeq L$: if they were not, it would be $L\simeq K'$; but then $K$ and $L$ could not be locally GCD equivalent, because a prime with Frobenius symbol $\langle\tau\rangle$ would have splitting type $(2,2)$ in one field but $(1,1,2)$ in the other.\\
    \end{itemize}
\section{Equivalence in degree 5}
\subsection{Galois groups for quintic fields}
Let $K$ be a field of degree 5 over $F$. The following are the possibilities for the Galois group $G$ of its Galois closure $\widehat{K}$. We shall focus mainly on the set of inert primes and its density.

\begin{itemize}
    \item[] $G=C_5$, the cyclic group of order 5. Then $\widehat{K}=K$ and
     $$\begin{matrix}
         &\delta_{\p}\{\pp\colon f_K(\pp)=(1,1,1,1,1)\} &= &1/5,\\\\
    &\delta_{\p}\{\pp\colon f_K(\pp)=(5)\} &= &4/5.
    \end{matrix}$$
    \\
    \item[] $G=D_5:=\langle\sigma,\tau | \sigma^5=\tau^2=1, \tau\sigma\tau =\sigma^{-1}\rangle.$ Then $\widehat{K}$ has degree 10 over $F$, contains 5 $F$-conjugated quintic fields and a unique quadratic extension $K_2/F$. Moreover:
     $$\begin{matrix}
         &\delta_{\p}\{\pp\colon f_K(\pp)=(1,1,1,1,1)\} &= &1/10,\\\\
         &\delta_{\p}\{\pp\colon f_K(\pp)=(1,2,2)\} &= &1/2,\\\\
    &\delta_{\p}\{\pp\colon f_K(\pp)=(5)\} &= &2/5.
    \end{matrix}$$
    \\
    \item[] $G=F_5:=\langle \sigma, \mu | \sigma^4=\mu^5=1, \mu\sigma= \sigma\mu^2\rangle$. Then $\widehat{K}$ has degree 20 over $F$, contains 5 $F$-conjugated quintic fields and a unique Galois, cyclic quartic extension $K_4/F$, and furthermore:
     $$\begin{matrix}
         &\delta_{\p}\{p\colon f_K(p)=(1,1,1,1,1)\} &= &1/20,\\\\
         &\delta_{\p}\{p\colon f_K(p)=(1,4)\} &= &3/4,\\\\
    &\delta_{\p}\{p\colon f_K(p)=(5)\} &= &1/5.
    \end{matrix}$$
    \\
    \item[] $G=A_5$, the alternating group with 60 elements. Then $\widehat{K}$ has degree 60 over $F$ and, most importantly, there are no non-trivial Galois $F$-extensions in it. The quintic fields in $\widehat{K}$ are all $F$-conjugated, and by Corollary \ref{SplittingGalois} every non-trivial subfield has the same splitting primes of $K$, implying that $\widehat{K}$ is uniquely determined by one of its non-trivial $F$-sub-extensions. Looking only at the inert primes, one gets:
     $$\begin{matrix}
    &\delta_{\p}\{\pp\colon f_K(\pp)=(5)\} &= &2/5.
    \end{matrix}$$
    \\
    \item[] $G=S_5$, the symmetric group with 120 elements. Then $\widehat{K}$ has degree 120 over $F$, its only Galois $F$-subfields being $F/F$ and a quadratic extension $K_2/F$. Every other $F$-sub-extension is non-Galois and shares with $\widehat{K}$ the set of splitting primes. The quintic subfields are $F$-conjugated. The inert primes satisfy:
     $$\begin{matrix}
          &\delta_{\p}\{\pp\colon f_K(\pp)=(5)\} &= &1/5.
    \end{matrix}$$
\end{itemize}

\subsection{Locally GCD equivalent quintic fields} Degree 5 extensions are the first one which present cases of primitive, non Galois extensions. Whenever one of these extensions occur, we will use the Big Galois Closure approach instead of the Galois companions.
\\
Let $K$ and $L$ be locally GCD equivalent fields of degree 5 over $F$. It is immediate from the density of the inert primes that, if one of them is Galois over $F$, then the two fields are actually isomorphic. Moreover, if $\widehat{K}$ has group $G_K=D_5$, then $\widehat{L}$ has group $G_L$ equal to either $D_5$ or $A_5$; if $\widehat{K}$ has $G_K=F_5$, then $\widehat{L}$ has group $G_L$ equal to either $F_5$ or $S_5$.
\\
\begin{itemize}
    \item $G_K=D_5$ and $G_L=D_5$: let $K_2/F$ and $L_2/F$ be the quadratic Galois companions of $K$ and $L$ respectively. Proposition \ref{propGalois} yield the following behaviour on inert primes:
    \begin{displaymath}
        \begin{tikzcd}
        & & (5\times 2)_{\widehat{K}}\arrow{ld}\arrow{rd} &\\
        &(5)_K\arrow{ru} & &(1,1)_{K_2}
        \end{tikzcd}
        \end{displaymath}
   Thus one has the identity
    \begin{equation}\label{D5D5}
         \{\pp\colon f_{K_2}(\pp)=(1,1), f_K(\pp)=(5)\} = \{\pp\colon f_{L_2}(\pp)=(1,1), f_L(\pp)=(5)\}.
    \end{equation}
    The above set has prime density equal to $2/5 > 1/4$, and this implies $K_2=L_2$ by Proposition \ref{Quadratic}.\\
    The remaining splitting primes in $K_2$ (which have density $1/2-2/5=1/10$) are precisely the primes which split completely in $\widehat{K}$. Thus Equality (\ref{D5D5}) and $K_2=L_2$ force $\widehat{K}$ and $\widehat{L}$ to have the same splitting primes, i.e. $\widehat{K}=\widehat{L}$. This yields $K\simeq L$.\\
    
    \item $G_K=F_5$ and $G_L=F_5$: this case is completely simlar to the previous one: just consider the quartic Galois companions $K_4$ and $L_4$ of $K$ and $L$ respectively. The inert primes of $K$ become splitting primes of $K_4$: like before, this forces $K_4=L_4$ and from that one gets $\widehat{K}=\widehat{L}$, which in turn gives $K\simeq L$.
   \\
    \item[**] $G_K=A_5$ and $G_L=A_5$: consider the Galois closures $\widehat{K}$ and $\widehat{L}$ and let us study their intersection.\\
    If $\widehat{K}\cap\widehat{L}$ is different from $F$, then there is a common non-trivial subfield, which identifies the same splitting primes for both the fields, implying $\widehat{K}=\widehat{L}$ and $K\simeq L$.\\
    So assume the intersection is equal to $F$: the composite Galois extension $\widehat{K}\widehat{L}$ has degree 3600 and Galois group $A_5\times A_5$. A prime $\pp$ which is inert in both $K$ and $L$ has a Frobenius symbol formed by elements of order 5 in $A_5\times A_5$. These elements have the form $(g,h)$ with $g^5=h^5=1_{A_5}$, with the only exception of $g=h=1_{A_5}$.\\
    But by local GCD equivalence, the set of such primes has prime density $2/5$, while the density of the primes having elements of order 5 in $A_5\times A_5$ as Frobenius symbols is $(25\cdot 25-1)/3600 = 624/3600 < 1/4 <2/5$, which is a contradiction.
    \\
    \item[**] We are left with the cases $G_K=S_5$ and $G_L=S_5$,  $G_K=D_5$ and $G_L=A_5$ and the case $G_K=F_5$ and $G_K=S_5$. These cases are solved by using the Big Galois Closure technique: one studies the intersection between $\widehat{K}$ and $\widehat{L}$ and must distinguish between two cases: if $\widehat{K}\cap \widehat{L}$ has degree greater or equal than 5, then the intersection is a field which uniquely detects both $\widehat{K}$ and $\widehat{L}$, forcing $\widehat{K}=\widehat{L}$ and thus the isomorphism between $K$ and $L$. If $[\widehat{K}\cap\widehat{L}:F]\leq 2$ instead (no degree 3 or 4 intersection occurs) then one imitates the proof of the case $G_K=A_5, G_L=A_5$ in order to get a composite field $\widehat{K}\widehat{L}$ with a degree so large that the density of order 5 Frobenius elements in $G_K\times G_L$ results strictly less then the the product of the densities of inert primes in $K$ and $L$, while the two things should be equal.

\end{itemize}

\section{Final Remarks}
\subsection{Comparing equivalent fields of different degree}
    The proofs in the previous section showed that any two number field extensions having same degree $n\leq 5$ which are locally GCD equivalent are in fact isomorphic. In order to complete the proof of Theorem \ref{LocalGCDEquivalence}, one needs to see what happens when one compares equivalent fields of different degrees.\\
    The prime densities computations of the previous sections show that this possibility cannot exist for locally GCD equivalent fields of degree $n\leq 5$: among the field extensions with these degrees, cubic fields can be equivalent (and thus isomorphic) only to cubic fields, because the inert primes have greatest common divisor of their splitting type equal to 3, a number which is not obtained in any other low degree. For the same reason, quintic fields can be equivalent only to quintic fields.\\
    We are left only with the comparison between quadratic and quartic extensions; but in any quadratic extension the inert primes have density $1/2$, while in quartic fields such a density value is not attained by primes with splitting type $(2,2)$.
    
\subsection{A counterexample in degree 6}    
    Theorem \ref{LocalGCDEquivalence} proves that the local GCD equivalence reduces to isomorphism on equivalent fields of degree $n\leq 5$. It can be proven that there are counterexamples already in degree 6 : in fact, for every Galois cubic extension $K/F$, it is possible to present two non isomorphic quadratic extensions $L/K$ and $M/K$ such that $L/F$ and $M/F$ are $F$-locally GCD equivalent extensions of degree 6.\\
    The construction relies on two concepts: first, local GCD equivalence can be proved to be equivalent to the fact that the norm groups of the fractional ideals are the same for the two extensions (see \cite{arithmeticalSimilarities}, Chapter VI, Section 1.b for the details). Then, using this different formulation, Stern \cite{stern} proved the existence of the sextic extensions $L/F$ and $M/F$ as above.
    \\
    Moreover, being the much stronger relation given by arithmetic equivalence not reducible to the isomorphism for degrees $n\geq 7$, we can finally state that $5$ is the maximum degree $n$ for which the claim of Theorem \ref{LocalGCDEquivalence} hold for every number field extension of degree $n$.
    
\subsection{Inert primes are not enough in quartic fields}
        As previously reported, Theorem \ref{LocalGCDEquivalence} can be expressed, for number fields extensions of prime degree $p\leq 5$, by saying that these extensions are uniquely determined by their inert primes. This formulation, although very elementary, has no direct references in literature: in fact, a proof of this result for cubic fields was the original reason for the author to start studying this subject, and which in the end led him to Lochter's paper \cite{lochterWeakKronecker}.
    \\
         One could wonder if also quartic fields are uniquely determined by their inert primes, in the cases for which they actually exist.
    This request is much weaker than local GCD equivalence, and, as we show below, it is not enough in order to have an isomorphism.\\
    In fact, there are easy counterexamples: take a quartic field $K$ with Galois closure $\widehat{K}$ having Galois group $D_4$ and consider the non-conjugated non-Galois field $K'$ contained in $\widehat{K}$ (refer to diagram \ref{latticeD4} for notations). Then a prime $\pp\subset\Of$ is inert in $K$ if and only if its Artin symbol in $D_4$ is formed by elements of order 4: but the computations given by Proposition \ref{propGalois} show that the very same property holds also for $K'$, and so we have two non-isomorphic quartic field extensions with same inert primes.
    \\
    As an explicit example, consider $K:= \Q[x]/(x^4-3x^2-3)$ and $K':=\Q[x]/(x^4-3x+3)$: these quartic fields are not Galois over $\Q$ and share the same Galois closure over $\Q$, which is the octic field $\widehat{K}:=\Q[x]/(x^8 + x^6 - 3x^4 + x^2 + 1)$ with Galois group $D_4$; so they share the inert primes, but in fact $K$ and $K'$ are not isomorphic.
    
\subsection{Similar results in higher degree}    
Although 5 is the maximum degree for which Theorem \ref{LocalGCDEquivalence} holds, it is still possible to get a similar rigidity result for large families of field extensions in arbitrary prime degree by a simple adaptation of the Big Galois Closure technique used previously.\\
 Let $p$ be a prime number. Let $K/F$ be a number field extension of degree $p$, and assume that its Galois closure has group equal to either $A_p$ or $S_p$. Applying Proposition \ref{propGalois} it is easy to prove that this field has inert primes. 
    If one mimics the procedure used to reduce the equivalence of quintic fields having group $A_5$ or $S_5$ to isomorphism, then it is possible to get the following theorem.
    \begin{thm}\label{LocalGCDEquivalencePrimeDegree}
    Let $K$ and $L$ be number fields of prime degree $p$ over $F$ which are $F$-locally GCD Equivalent and such that their Galois closures share the same Galois group $G$. Assume $G$ equal either to $A_p$ or $S_p$  Then $K$ and $L$ are $F$-isomorphic.
    \end{thm}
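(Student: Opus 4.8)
The plan is to run the \emph{big Galois closure} argument already used for $A_5$ and $S_5$, with the prime $5$ replaced by a general odd prime $p$; the only ingredients that change are the order of the ambient group and the bookkeeping of $p$-cycles. Throughout, write $G$ for the common Galois group and $\widehat{K},\widehat{L}$ for the Galois closures. Note that $|G|>p$ (so that $K,L$ are genuinely non-Galois) in every case except the degenerate $G=A_3=C_3$, where $K=\widehat{K}$ is Galois and the claim is immediate from Corollary \ref{SplittingGalois}; so I assume $|G|>p$.

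First I would record, via Proposition \ref{propGalois}, that a prime $\pp\subset\Of$ is inert in a degree-$p$ field exactly when a Frobenius element acts as a single $p$-cycle on the $p$ roots, and that $G$ contains $(p-1)!$ such cycles (all $p$-cycles are even for $p$ odd, hence lie in $A_p$). Dividing by $|G|$ gives the density of inert primes $d=1/p$ when $G=S_p$ and $d=2/p$ when $G=A_p$; in particular $0<d<1$. Since the $\gcd$ of the splitting type equals the prime $p$ precisely on inert primes, local GCD equivalence forces the inert primes of $K$ and of $L$ to coincide up to null density, so the set of primes inert in both fields has density exactly $d$.

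Next I would set $M:=\widehat{K}\cap\widehat{L}$ and use that the intersection of two Galois extensions of $F$ is again Galois: thus $M/F$ is Galois and corresponds to a normal subgroup $N\trianglelefteq G$. Because $A_p$ is simple for $p\geq 5$, while the only proper nontrivial normal subgroup of $S_p$ is $A_p$, the field $M$ is forced to be $F$, all of $\widehat{K}$, or---only in the $S_p$ case---the unique quadratic subextension $K_2=\widehat{K}^{A_p}$. If $M=\widehat{K}$ then $\widehat{K}\subseteq\widehat{L}$, and equality of degrees yields $\widehat{K}=\widehat{L}$; since the degree-$p$ subfields of a common Galois closure are $F$-conjugate, this gives $K\simeq L$. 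It remains to eliminate $M=F$ and $M=K_2$ by a density contradiction.

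Finally I would compute, inside $\widehat{K}\widehat{L}$, the density of the primes inert in both fields and compare it with $d$. When $M=F$ the composite has group $G\times G$; a prime is inert in both exactly when its Frobenius is a pair of $p$-cycles, a set closed under conjugation, so Chebotarev gives density $d^2$, and equating $d=d^2$ contradicts $0<d<1$. When $G=S_p$ and $M=K_2$ the composite has group the fibre product $\{(g,h)\in S_p\times S_p:\operatorname{sgn}(g)=\operatorname{sgn}(h)\}$ of order $(p!)^2/2$, which still contains all $((p-1)!)^2$ pairs of $p$-cycles (these being even for odd $p$), yielding density $2/p^2<1/p=d$ for $p\geq 3$; again this contradicts the value forced by equivalence. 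Hence only $\widehat{K}=\widehat{L}$ survives, proving $K\simeq L$. The step I expect to be the real obstacle is the purely group-theoretic input behind $\widehat{K}=\widehat{L}\Rightarrow K\simeq L$, namely that the index-$p$ subgroups of $A_p$ and $S_p$ in the natural action form a single conjugacy class (the point stabilizers), so that $K$ and $L$ are $F$-conjugate rather than merely both of degree $p$; this is the structural fact already invoked in the explicit degree-$5$ cases and should be cited as such, whereas the density computations are routine once one is careful to use the fibre product rather than the full direct product in the $S_p$ situation.
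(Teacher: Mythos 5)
Your proposal is correct and follows essentially the same route as the paper, which explicitly derives Theorem \ref{LocalGCDEquivalencePrimeDegree} by "mimicking" the Big Galois Closure argument used for the $A_5$ and $S_5$ quintic cases: analyse $\widehat{K}\cap\widehat{L}$ via the normal subgroup structure of $A_p$ and $S_p$, and rule out small intersections by comparing the density of commonly inert primes (forced to be $1/p$ or $2/p$ by local GCD equivalence) with the Chebotarev density of pairs of $p$-cycles in the fibre product. Your write-up is in fact more precise than the paper's sketch, correctly using the fibre product over the quadratic subfield in the $S_p$ case and flagging the conjugacy of core-free index-$p$ subgroups as the group-theoretic input.
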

    
    Theorem \ref{LocalGCDEquivalencePrimeDegree} is actually very strong, because of the fact that a ``random" number field extension of prime degree tends to have Galois group of its closure equal to the symmetric group $S_p$: from this one can conclude that, for these degrees, the local GCD equivalence reduces very often to isomorphism.\\
    A stronger result, always by Lochter, proves Theorem \ref{LocalGCDEquivalencePrimeDegree} for every degree $n$ and Galois groups $S_n$ and $A_n$. At the moment, it seems not reachable without the group-theoretic setting, or by means of the Big Galois Closure technique alone.

\end{document}